\newtheorem{thm}{Theorem}[section]
\newtheorem{lemma}[thm]{Lemma}
\newtheorem{prop}[thm]{Proposition}
\newtheorem{coroll}[thm]{Corollary}
\theoremstyle{definition}
\newcommand{\R}{{\mathbb{R}}}
\newcommand{\Z}{{\mathbb{Z}}}
\newcommand{\N}{{\mathbb{N}}}
\newcommand{\C}{{\mathbb{C}}}
\newcommand{\cA}{{\mathcal{A}}}
\newcommand{\cB}{{\mathcal{B}}}
\newcommand{\cM}{{\mathcal{M}}}
\newcommand{\fc}{{:\ }}
\newcommand{\ol}{\overline}
\newcommand{\wh}{\widehat}
\newcommand{\tb}{\textbf}
\DeclareMathOperator{\im}{im}
\DeclareMathOperator{\id}{id}
\DeclareMathOperator{\Conf}{Conf}
\DeclareMathOperator{\Symp}{Symp}
\DeclareMathOperator{\Cont}{Cont}
\DeclareMathOperator{\Diff}{Diff}
\title%[The contact mapping class group]
{On the contact mapping class group of the contactization of the $A_m$-Milnor fiber}
\author{Sergei Lanzat and Frol Zapolsky}
\date{}
\begin{document}

\renewcommand{\labelenumi}{(\roman{enumi})}

\maketitle

\begin{abstract}We construct an embedding of the full braid group on $m+1$ strands $B_{m+1}$, $m \geq 1$, into the contact mapping class group of the contactization $Q \times S^1$ of the $A_m$-Milnor fiber $Q$. The construction uses the embedding of $B_{m+1}$ into the symplectic mapping class group of $Q$ due to Khovanov and Seidel, and a natural lifting homomorphism. In order to show that the composed homomorphism is still injective, we use a partially linearized variant of the Chekanov--Eliashberg dga for Legendrians which lie above one another in $Q \times \R$, reducing the proof to Floer homology. As corollaries we obtain a contribution to the contact isotopy problem for $Q \times S^1$, as well as the fact that in dimension $4$, the lifting homomorphism embeds the symplectic mapping class group of $Q$ into the contact mapping class group of $Q \times S^1$.
\end{abstract}

\section{Introduction and main result}\label{sec:intro}

Let $(V,\xi)$ be a contact manifold with a cooriented contact structure, let $\Cont(V,\xi)$ be its group of coorientation-preserving compactly supported contactomorphisms, and let $\Cont_0(V,\xi) \subset \Cont(V,\xi)$ be the identity component. In this note we study the contact mapping class group
$$\pi_0\Cont(V,\xi) = \Cont(V,\xi)/\Cont_0(V,\xi)$$
when $V$ is the contactization of the $A_m$-Milnor fiber. For $m,n \in \N$ with $n \geq 2$ the $2n$-dimensional Milnor fiber is defined as the complex hypersurface
$$Q \equiv Q_m = \{w=(w_0,\dots,w_n) \,| \, w_0^2 + \dots + w_n^2 + w_n^{m+1} = 1\} \subset \C^{n+1}\,.$$
The symplectic form $\omega \equiv \omega_Q$ is the restriction of the standard symplectic form $\omega_0  = \tfrac i 2 dw \wedge d\ol w$ on $\C^{n+1}$. We let $\theta \equiv \theta_Q$ be the restriction of the standard primitive $\theta_0 = \tfrac i 4 (w\,d\ol w - \ol w \, dw)$. The contactization has $V = Q \times S^1$ as the underlying manifold and $\alpha = dz - \theta$ as the contact form, where $z$ is the coordinate on $S^1$, giving the contact distribution $\xi = \ker \alpha$. Let $B_{m+1}$ denote the full braid group on $m+1$ strands. Our main result is
\begin{thm}\label{thm:main_thm} There is a group embedding
$$\rho_{\Cont} \fc B_{m+1} \to \pi_0\Cont(V,\xi)\,.$$
\end{thm}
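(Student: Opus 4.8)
The plan is to factor $\rho_{\Cont}$ as the composition of the Khovanov--Seidel embedding with an explicit lifting homomorphism, and then to prove that the composition remains injective by extracting a Floer-theoretic invariant from a partially linearized Chekanov--Eliashberg dga.

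First I would recall the Khovanov--Seidel representation $\rho_{\Symp}\fc B_{m+1}\to\pi_0\Symp_c(Q,\theta)$, which sends the standard generators to the classes of compactly supported exact Dehn twists along an $A_m$-chain of Lagrangian spheres $L_0,\dots,L_m\subset Q$, and which is injective. Next I would define the lifting homomorphism. Given a compactly supported exact symplectomorphism $\phi$ of $(Q,\theta)$, write $\phi^*\theta-\theta=dh_\phi$ with $h_\phi$ the unique primitive vanishing near infinity, and set $\tilde\phi(q,z)=(\phi(q),z+h_\phi(q))$. A direct computation gives $\tilde\phi^*\alpha=\tilde\phi^*(dz-\theta)=(dz+dh_\phi)-(\theta+dh_\phi)=\alpha$, so $\tilde\phi$ is a compactly supported strict contactomorphism of $(V,\xi)$; the cocycle identity $h_{\phi_1\phi_2}=h_{\phi_2}+h_{\phi_1}\circ\phi_2$ shows that $\phi\mapsto\tilde\phi$ is a homomorphism, and a compactly supported exact symplectic isotopy lifts to a contact isotopy, so this descends to a lifting homomorphism $\Lambda\fc\pi_0\Symp_c(Q,\theta)\to\pi_0\Cont(V,\xi)$. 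I then set $\rho_{\Cont}=\Lambda\circ\rho_{\Symp}$; since a composite of homomorphisms is a homomorphism, it remains only to prove injectivity.

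For injectivity the idea is to build a contact-isotopy invariant of $\rho_{\Cont}(\beta)$ that recovers exactly the Floer-theoretic data Khovanov--Seidel use to detect $\beta$. Since each $L_i\cong S^n$ is simply connected for $n\ge2$, the form $\theta|_{L_i}$ is exact, so $L_i$ lifts to a Legendrian sphere $\tilde L_i\subset Q\times\R$ in the universal cover of the contactization, and compactly supported contactomorphisms of $V$ lift to $\Z$-equivariant contactomorphisms of $Q\times\R$. Given $\psi=\rho_{\Cont}(\beta)$ represented by $\tilde\phi_\beta$, I would consider the Legendrian link $\Gamma_\beta=\bigsqcup_i\tilde L_i\,\sqcup\,\bigsqcup_j\tilde\phi_\beta(\tilde L_j)$, with the components arranged at generic heights so that all Reeb chords are transverse. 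Its partially linearized Chekanov--Eliashberg dga, linearized along the copies $\tilde L_i$ by the augmentations coming from fillings of the $L_i$, is invariant under Legendrian isotopy, hence under contact isotopy of $\psi$, so its homology is an invariant of the class $\psi\in\pi_0\Cont(V,\xi)$. The crucial identification is that, because the Reeb flow is $\partial_z$, the mixed Reeb chords from $\tilde L_i$ to $\tilde\phi_\beta(\tilde L_j)$ are in bijection with the intersection points $L_i\cap\phi_\beta(L_j)$, and the dga differential counting rigid holomorphic disks reduces to the Lagrangian Floer differential; consequently the mixed part of this partially linearized homology computes $\bigoplus_{i,j}HF^*(L_i,\phi_\beta(L_j))$ together with its module structure over the Floer cohomologies of the $A_m$-chain.

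Finally I would invoke faithfulness. If $\rho_{\Cont}(\beta)$ is the identity class, then the invariant of $\tilde\phi_\beta$ agrees with that of the identity, forcing the Floer-theoretic package $\{HF^*(L_i,\phi_\beta(L_j))\}$, with all of its structure, to coincide with that of the untwisted chain; by the Khovanov--Seidel realization of the faithful $B_{m+1}$-action on the derived category of the $A_m$-configuration (spherical twists, in the sense of Seidel--Thomas), this forces $\beta=1$. I expect the main obstacle to lie in the third paragraph: correctly setting up the partially linearized dga for Legendrians lying above one another in the noncompact manifold $Q\times\R$, establishing its invariance together with the requisite compactness for the relevant moduli of holomorphic disks, and then proving that the differential on the mixed generators agrees on the nose with the Floer differential, so that enough of the Khovanov--Seidel category structure — not merely the ranks of the Floer groups — is recovered. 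A secondary technical point is the bookkeeping for the $S^1$ versus $\R$ passage and the action/height filtration, ensuring that the lift $\tilde\phi_\beta$ and its Reeb chords are controlled well enough for both the invariance and the Floer identification to hold.
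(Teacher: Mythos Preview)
Your framework matches the paper's: define $\rho_{\Cont}$ as the lift of the Khovanov--Seidel representation, pass to $Q\times\R$, use the Chekanov--Eliashberg dga of a two-component Legendrian link (what the paper calls the dg bimodule) with one component above the other, and identify the linearized homology on mixed chords with Lagrangian Floer homology of the projections. Where you diverge is in the endgame. You aim to extract from the Legendrian invariant not just the groups $HF_*(K_i,\phi_\sigma(K_j))$ but enough module/product structure to invoke Seidel--Thomas categorical faithfulness, and you correctly flag this as the main obstacle. The paper sidesteps this entirely: it only uses the \emph{ranks} $\dim HF_*(K_i,\phi_\sigma(K_j))$, which by Khovanov--Seidel's Theorem equal $2I(b_i,f_\sigma(b_j))$. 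The trick is to run the argument twice, for $\sigma$ and for $\sigma^2$, obtaining $I(b_i,b_j)=I(b_i,f_\sigma(b_j))=I(b_i,f_{\sigma^2}(b_j))$ for all $i,j$; then a purely combinatorial lemma of Khovanov--Seidel (their Lemma~3.6) on geometric intersection numbers of curves forces $\sigma=1$. So your worry about recovering categorical structure from the bimodule is legitimate but unnecessary: ranks alone suffice once you remember to test $\sigma^2$ as well. One further point you should make explicit is that the contact isotopy on $Q\times S^1$ lifts to an isotopy on $Q\times\R$ with support in $C\times\R$, and that for fixed $t\in[0,1]$ the image $\wh\phi_t(L_j)$ is compact, so by compactness of $[0,1]$ one can choose the initial heights so that $\wh\phi_t(L_j)>L_i$ throughout; this ``one above the other'' condition is exactly what makes the bimodule invariance (your ``partial linearization'') go through.
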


This embedding is constructed as follows. Since $Q$ has one end, that is it is exhausted by compacts with connected complement, and $H^1(Q;\R) = 0$, for a compactly supported symplectomorphism $\phi$ of $(Q,\omega)$ there is a unique function $H_\phi \in C^\infty(Q)$ with compact support such that $\phi^*\theta = \theta + dH_\phi$. We let the contact lift of $\phi$ to $V$ be defined by
$$\widehat \phi (p,z) = (\phi(p),z + H_\phi(p) \mod 1)\,.$$
If $\psi$ is another compactly supported symplectomorphism, then it can be checked that $H_{\phi \circ \psi} = H_\psi + H_{\phi}\circ \psi$ and therefore that
$$\widehat{}\; \fc \Symp(Q,\omega) \to \Cont(V,\xi)$$
is a group homomorphism, where $\Symp(Q,\omega)$ is the group of compactly supported symplectomorphisms of $Q$. This homomorphism is also continuous and therefore induces the lifting homomorphism on mapping class groups
$$\widehat{}\; \fc \pi_0\Symp(Q,\omega) \to \pi_0\Cont(V,\xi)\,.$$
In \cite{Khovanov_Seidel_Quivers_HF_braid_gp_actions} Khovanov--Seidel construct an embedding
$$\rho_{\Symp} \fc B_{m+1} \to \pi_0\Symp(Q,\omega)\,.$$
Our embedding is obtained by composing it with the lifting homomorphism:
$$\rho_{\Cont}:=\widehat {}\;\circ \rho_{\Symp} \fc B_{m+1} \to \pi_0\Cont(V,\xi)\,.$$
Our main contribution is showing that this composed homomorphism remains injective.

When $n=2$, \cite{Wu_Exact_Lags_A_n_surf_singularities} Wu showed that the embedding $\rho_{\Symp}$ is in fact onto and therefore an isomorphism. We thus obtain
\begin{coroll}
For $n=2$, the lifting homomorphism
$$\widehat{}\; \fc \pi_0\Symp(Q,\omega) \to \pi_0\Cont(V,\xi)$$
is an embedding. \qed
\end{coroll}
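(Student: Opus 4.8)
The plan is to derive this as a formal consequence of Theorem~\ref{thm:main_thm} and Wu's surjectivity result, requiring no further Floer-theoretic work. Recall the factorization $\rho_{\Cont} = \widehat{}\;\circ\, \rho_{\Symp}$ of the composed embedding. In general this identity already shows that the restriction of the lifting homomorphism $\widehat{}\;$ to the subgroup $\im \rho_{\Symp} \subseteq \pi_0\Symp(Q,\omega)$ is injective: any nontrivial element of $\im\rho_{\Symp}$ has the form $\rho_{\Symp}(\beta)$ with $\beta \neq 1$, and were it sent to the identity by $\widehat{}\;$, then $\rho_{\Cont}(\beta) = \widehat{}\;(\rho_{\Symp}(\beta))$ would be trivial, contradicting injectivity of $\rho_{\Cont}$. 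Thus for arbitrary $n \geq 2$ one obtains injectivity of $\widehat{}\;$ only on the image of the Khovanov--Seidel representation.

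The role of the hypothesis $n=2$ is precisely to promote this partial statement to a global one. By Wu's theorem, in this dimension $\rho_{\Symp}$ is onto, so $\im \rho_{\Symp} = \pi_0\Symp(Q,\omega)$ and $\rho_{\Symp}$ is a group isomorphism admitting a two-sided inverse $\rho_{\Symp}^{-1} \fc \pi_0\Symp(Q,\omega) \to B_{m+1}$. Post-composing the factorization on the right with $\rho_{\Symp}^{-1}$ yields $\widehat{}\; = \rho_{\Cont}\circ \rho_{\Symp}^{-1}$, exhibiting the lifting homomorphism as an isomorphism followed by the injection $\rho_{\Cont}$. Hence $\widehat{}\;$ is injective, i.e.\ an embedding, as claimed.

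I would emphasize that there is no genuine obstacle in this corollary: all the content resides in Theorem~\ref{thm:main_thm}, whose proof is the technical heart of the paper, and in the external input of Wu's surjectivity. Once those two facts are available, the argument is a one-line diagram chase; the only thing to check is the bookkeeping of domains, which matches exactly because $\widehat{}\;$ and $\rho_{\Cont}\circ\rho_{\Symp}^{-1}$ both have domain $\pi_0\Symp(Q,\omega)$.
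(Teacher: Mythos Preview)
Your argument is correct and matches the paper's intended proof exactly: the corollary is marked with an immediate \qed because it follows formally from Theorem~\ref{thm:main_thm} together with Wu's result that $\rho_{\Symp}$ is an isomorphism when $n=2$, via the factorization $\widehat{}\; = \rho_{\Cont}\circ\rho_{\Symp}^{-1}$.
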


The contact isotopy problem, analogous to the symplectic isotopy problem (see for instance \cite{Seidel_PhD_thesis}), is about the existence of contactomorphisms which are smoothly isotopic to the identity but not so through contactomorphisms. Let us first make some remarks about the symplectic case. Recall that given a Lagrangian embedding $\ell \fc S^n \to P$, where $P$ is a symplectic manifold, one can define a symplectomorphism of $P$ with support in an arbitrarily small neighborhood of $K = \im \ell$, called the generalized Dehn twist \cite{Seidel_PhD_thesis}, which we denote $\tau_\ell$, or if the embedding is clear from the context, $\tau_K$. It is known that the following symplectomorphisms are smoothly isotopic to the identity by an isotopy supported in a small neighborhood of $K$: when $n=2$, $\tau_K^2$ \cite{Seidel_PhD_thesis, Seidel_Lag_2_spheres_symp_knotted, Seidel_Graded_Lags}; when $n = 6$, $\tau_K^2$ using the cross product on $\R^7$ \cite{Frauenfelder_Schlenk_Vol_growth_component_Dehn_twist}; when $n$ is even, $\tau_K^8$ \cite{Frauenfelder_Schlenk_Vol_growth_component_Dehn_twist}. See also the references therein.

\begin{coroll}\label{cor:ct_isotopy_problem_V}Let $G \triangleleft B_{m+1}$ be the pure braid group $P_{m+1}$ if $n = 2,6$, otherwise let $G$ be the normal subgroup of $B_{m+1}$ generated by the eighth powers of the Artin generators $\sigma_1,\dots,\sigma_m$. Then the embedding $\rho_{\Cont}$ maps $G$ to contact classes which are smoothly trivial. In other words $\rho_{\Cont}(G)$ is contained in the kernel of the natural forgetful homomorphism
$$\beta_{\Cont}\fc \pi_0\Cont(V,\xi) \to \pi_0\Diff^+(V)\,.$$
\end{coroll}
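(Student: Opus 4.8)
The plan is to show that the composite homomorphism $\beta_{\Cont}\circ\rho_{\Cont}\fc B_{m+1}\to\pi_0\Diff^+(V)$ vanishes on $G$, and to do this by pushing the problem down to the symplectic level, where the smooth triviality of powers of Dehn twists recalled above applies directly. The bridge between the two levels is a commutative square relating the forgetful homomorphisms $\beta_{\Symp}$ and $\beta_{\Cont}$ through the ``trivial suspension'' homomorphism $L\fc\pi_0\Diff^+(Q)\to\pi_0\Diff^+(V)$ induced by $\phi\mapsto\phi\times\id_{S^1}$. Granting this square, one has $\beta_{\Cont}\circ\rho_{\Cont}=\beta_{\Cont}\circ\widehat{}\;\circ\rho_{\Symp}=L\circ\beta_{\Symp}\circ\rho_{\Symp}$, so it suffices to prove that $\beta_{\Symp}\circ\rho_{\Symp}$ kills $G$.

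First I would establish the square, i.e.\ that $\beta_{\Cont}\circ\widehat{}\;=L\circ\beta_{\Symp}$. The point is that for a compactly supported $\phi\in\Symp(Q,\omega)$ the contact lift $\widehat\phi(p,z)=(\phi(p),z+H_\phi(p))$ is smoothly isotopic, through compactly supported diffeomorphisms of $V$, to the product $\phi\times\id_{S^1}$: the interpolation $(p,z)\mapsto(\phi(p),z+tH_\phi(p))$, $t\in[0,1]$, does the job, and it is compactly supported because $H_\phi$ is. Since $\phi\mapsto\phi\times\id_{S^1}$ is a continuous homomorphism of compactly supported diffeomorphism groups, it descends to $L$ on $\pi_0$, and the previous sentence gives exactly $\beta_{\Cont}\widehat{[\phi]}=L\,\beta_{\Symp}[\phi]$. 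The only things to monitor are that the isotopies used remain orientation-preserving and compactly supported, which is immediate.

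It then remains to show $\beta_{\Symp}\circ\rho_{\Symp}(G)=1$. By the construction of the Khovanov--Seidel embedding, each Artin generator satisfies $\rho_{\Symp}(\sigma_i)=[\tau_{L_i}]$, the class of the Dehn twist along a Lagrangian sphere $L_i\cong S^n\subset Q$. The recalled statements say that $\tau_{L_i}^2$ is smoothly isotopic to the identity when $n=2,6$, and that $\tau_{L_i}^8$ is so for every even $n$; in each case the isotopy is supported near $L_i$, hence compactly supported. Consequently $\beta_{\Symp}\circ\rho_{\Symp}$ sends $\sigma_i^2$ (resp.\ $\sigma_i^8$) to the trivial smooth class for every $i$. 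Its kernel is therefore a normal subgroup of $B_{m+1}$ containing all the $\sigma_i^2$ (resp.\ $\sigma_i^8$), and so contains their normal closure. For general even $n$ this normal closure is by definition $G$; for $n=2,6$ it equals $P_{m+1}$, since $B_{m+1}/\langle\langle\sigma_i^2\rangle\rangle\cong S_{m+1}\cong B_{m+1}/P_{m+1}$ together with the inclusion $\langle\langle\sigma_i^2\rangle\rangle\subseteq P_{m+1}$ forces $\langle\langle\sigma_i^2\rangle\rangle=P_{m+1}$ by counting. In both cases $G\subseteq\ker(\beta_{\Symp}\circ\rho_{\Symp})$, and combined with the square this yields $\rho_{\Cont}(G)\subseteq\ker\beta_{\Cont}$.

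The genuine mathematical content, the smooth triviality of $\tau_K^2$ and $\tau_K^8$, is imported from the cited works, so the only real work left is the commutative square. I expect its verification to be the main, though modest, obstacle: one must be careful that every isotopy used is compactly supported and orientation-preserving, so that it genuinely represents the trivial class in the \emph{compactly supported} mapping class groups in play, and that the suspension $\phi\mapsto\phi\times\id_{S^1}$ is well defined on $\pi_0$ independently of the chosen representative.
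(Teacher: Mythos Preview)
Your argument is correct and follows essentially the same approach as the paper's proof, which compresses the lifting step into the single sentence ``Smooth isotopies of $Q$ with compact support lift to smooth isotopies of $V$ with compact support'' and then invokes Birman's result that the normal closure of the $\sigma_i^2$ equals $P_{m+1}$. Your commutative square via the explicit interpolation $(p,z)\mapsto(\phi(p),z+tH_\phi(p))$ simply makes precise what the paper leaves implicit, namely how a smooth isotopy between $\id_Q$ and $\phi$ in $Q$ yields one between $\id_V$ and $\widehat\phi$ (rather than merely $\phi\times\id_{S^1}$) in $V$.
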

\noindent Here $\Diff^+(V)$ is the group of orientation-preserving diffeomorphisms of $V$ with compact support.
\begin{proof}[Proof of Corollary \ref{cor:ct_isotopy_problem_V}] Smooth isotopies of $Q$ with compact support lift to smooth isotopies of $V$ with compact support. It remains to note that the normal subgroup of $B_{m+1}$ generated by $\sigma_1^2,\dots,\sigma_m^2$ is none other than the subgroup of pure braids $P_{m+1}$ (as follows from \cite[Lemma 1.8.2]{Birman_Braids_links_MCGs}).
\end{proof}

This corollary is a step toward the solution of the contact isotopy problem for $V$. Regarding compact manifolds see the paper by Massot--Nie\-der\-kr\"u\-ger \cite{Massot_Niederkrueger_Examples_nontrivial_contact_mapping_classes_all_dims}, where they provide examples of compact contact manifolds in every dimension such that for each one of them there are infinitely many pairwise distinct contact classes, which are all smoothly trivial. Our contribution is in a sense complementary to theirs, in that our contact manifolds are noncompact, and we provide a large \emph{subgroup} of contact classes which are smoothly trivial. Interestingly enough, our methods are different as well.

Lastly, let us mention that for the standard contact sphere $S^3$ the homotopy type of the full group of contactomorphisms is known, see for instance \cite{Casals_Spacil_Chern_Weil_thry_Strict_Cont} for a proof. Also, when $V$ is a tensor power of the unit cocircle bundle over a closed orientable surface of positive genus, the corresponding contact mapping class group has been computed by Giroux--Massot \cite{Giroux_Massot_Ct_MCG_Leg_circle_bundles}, also see references therein.

\tb{Organization of the paper.} In Section \ref{sec:prelims} we gather the necessary preliminaries. The main technical tool we use is the dg bimodule associated to a two-component Legendrian link. This is a simplification of the Chekanov--Eliashberg dga, adapted to our geometric situation whereby the components of the link lie one above the other. We introduce all the necessary algebraic definitions, namely the stable tame isomorphism type and linearized homology, analogously to the standard treatment in Legendrian contact homology. Section \ref{sec:prf} contains the proof of Theorem \ref{thm:main_thm}.

\tb{Acknowledgements.} We wish to thank Mohammed Abouzaid, Fr\'e\-d\'e\-ric Bourgeois, Baptiste Chantraine, Yasha Eliashberg, Ailsa Keating, Patrick Massot, Dusa McDuff, Cedric Membrez, and Leonid Polterovich for discussions of the results of this paper and for their interest. We are partially supported by grant number 1281 from the GIF, the German--Israeli Foundation for Scientific Research and Development. FZ is also partially supported by grant number 1825/14 from the Israel Science Foundation. Specials thanks are due to Peter Albers, out of a discussion with whom an idea was born which led to this paper; it is part of an ongoing joint research project, which is funded by the aforementioned grant from the GIF.

\section{Preliminaries}\label{sec:prelims}

Throughout this section $(P,\omega = d\theta)$ is an exact symplectic $2n$-manifold with one end and $H^1(P;\R) = 0$. We assume in addition that there is an $\omega$-compatible almost complex structure $I_0$ on $P$ such that $(P,\omega,I_0)$ has finite geometry at infinity \cite{Ekholm_Etnyre_Sullivan_LCH_P_times_R}.

\subsection{Contactization of exact symplectic manifolds}

The contactization of $(P,d\theta)$ is $(P \times \R,\xi = \ker \alpha)$ where the contact form $\alpha = dz - \theta$, $z$ being the $\R$-coordinate. Note that the Reeb field of $\alpha$ is $R_\alpha = \partial_z$. We have the canonical Lagrangian projection
$$\Pi \fc P \times \R \to P\,.$$
Recall that an $n$-dimensional submanifold $L \subset P \times \R$ is called Legendrian if it is tangent to the contact distribution $\xi$, or equivalently $\alpha|_L \equiv 0$. A Reeb chord of $L$ is by definition a curve $c \fc [0,T] \to P \times \R$, where $T > 0$, such that $\dot c = R_\alpha \circ c$ and the endpoints $c(0),c(T)$ lie on $L$.% We adopt the following notation: if $c$ is a Reeb chord of $L$, we let $c^* = \Pi(c)$ be the corresponding point in $\Pi(L) \subset P$.

The restriction $\Pi|_L$ is an immersion. If $L$ is a closed submanifold and $\Pi|_L$ is injective, then $K=\Pi(L)$ is a Lagrangian submanifold of $P$ and $\Pi|_L \fc L \to K$ is a diffeomorphism. We refer to $L$ as a Legendrian lift of $K$. Note that $K$ is an exact Lagrangian, that is there is $F \in C^\infty(K)$ with $dF = \theta|_K$. Indeed, we can take $F = z \circ (\Pi|_L)^{-1}$. Conversely, if $K$ is an exact Lagrangian submanifold and $\theta|_K = dF$ for some $F \in C^\infty(K)$, then a Legendrian lift $L$ can be defined as
$$L = \{(p,F(p))\,|\, p \in K\} \subset P \times \R\,.$$
Note that Legendrian lifts are not unique, and that any two Legendrian lifts of the same connected exact Lagrangian differ by a shift in the $z$ direction.

The lifting homomorphism \;$\wh{}\;$ described in Section \ref{sec:intro} can be defined in the same way for $P$, and it results in a map
$$\wh{}\; \fc \Symp(P,\omega) \to \Cont(P\times \R,\xi)\,,$$
where $\Cont(P \times \R,\xi)$ is the set of contactomorphisms having support in a set of the form $C\times \R$, where $C \subset P$ is compact (it may depend on the contactomorphism). It can similarly be seen that it is a homomorphism. Moreover if $\phi \in \Symp(P,\omega)$ and $L \subset P \times \R$ is a Legendrian lift of $K \subset P$, then $\wh\phi(L)$ is a Legendrian lift of $\phi(K)$.

There is another version of contactization. Namely, consider the manifold $P \times S^1$. Letting $z$ be the coordinate on $S^1$, we likewise can define a contact form $\alpha = dz - \theta$ on $P \times S^1$, giving rise to the contact structure $\xi = \ker \alpha$. We similarly have the lifting homomorphism
$$\wh{}\; \fc \Symp(P,\omega) \to \Cont(P \times S^1,\xi)\,,$$
where now $\Cont(P\times S^1,\xi)$ stands for the group of contactomorphisms of $P \times S^1$ with compact support.

\subsection{Lagrangian Floer homology}

We briefly recall the basic definitions and notation for Lagrangian Floer theory for exact Lagrangians. The reader is referred to the original papers \cite{Floer_Morse_thry_Lagr_intersections, Floer_unregularized_grad_flow_symp_action} for more detailed definitions and proofs. Throughout we work with $\Z_2$ as the coefficient ring. 

Let $L_0,L_1 \subset P$ be two closed connected exact Lagrangian submanifolds, which we for simplicity assume to be simply connected. Assume they intersect transversely. Put
$$CF_*(L_0,L_1) = \bigoplus_{p \in L_0 \cap L_1}\Z_2\cdot p\,.$$
The grading is given by the Maslov--Viterbo index \cite{Viterbo_Intersections_sous_varietes_lags_fonctionnelles_action_indice_systemes_hams}. It is a relative $\Z$-grading, meaning that for $p,q \in L_0,L_1$ the difference of their degrees $|p| - |q| \in \Z$ is well-defined.

Fix a time-dependent compatible almost complex structure $J_t$ on $P$, which coincides with $I_0$ outside a compact subset, and for $p,q \in L_0 \cap L_1$ consider the moduli space $\cM(p,q)$ of $J$-holomorphic strips with boundaries on $L_0,L_1$, asymptotic to $p,q$ (modulo the $\R$-action). For generic $J$ this is a smooth manifold of dimension $|p| - |q| - 1$. Moreover the moduli space is compact in dimension zero. We can then define the Floer boundary operator via
$$\partial (p) = \sum_{q:|p| - |q| = 1}\#_2 \cM(p,q)q\,.$$
It is known that $\partial^2 = 0$. The resulting homology is the Floer homology $HF_*(L_0,L_1;J)$.

The following properties are well-known:
\begin{itemize}
 \item The Floer homology $HF_*(L_0,L_1;J)$ is independent of $J$, and therefore we will write $HF_*(L_0,L_1)$.
 \item If $\{\phi_t\}_{t\in[0,1]}$ is a Hamiltonian isotopy, then there are induced canonical isomorphisms
 $$HF_*(\phi_1(L_0),L_1) \simeq HF_*(L_0,L_1) \simeq HF_*(L_0,\phi_1(L_1))\,.$$
\end{itemize}
The latter property allows us to define the Floer homology of two Lagrangians which do not necessarily intersect transversely. Indeed, it suffices to perturb one of them using a small Hamiltonian perturbation. The resulting homology is then independent of the perturbation used.

\subsection{The $A_m$-Milnor fiber}\label{subsec:Milnor_fiber}

Let $m,n \in \N$ with $n \geq 2$. Recall the definition of the $2n$-dimensional $A_m$-Milnor fiber $Q = Q_m$ in Section \ref{sec:intro}. The following properties are well-known:
\begin{itemize}
 \item $Q$ is simply connected. In fact, it is homotopy equivalent to the wedge of $m$ spheres of dimension $n$ \cite{Milnor_Singular_pts_cx_hypersurfaces, AGLV_Singularity_theory_I}.
 \item The first Chern class of $Q$ vanishes \cite{Seidel_Lag_2_spheres_symp_knotted, Seidel_Graded_Lags}.
 \item Note that $\omega = -d(df \circ i)$, where $f \fc Q \to \R$ is the restriction of the function $\tfrac 1 4|w|^2$ on $\C^{n+1}$ and $i$ is the complex structure induced on $Q$ from $\C^{n+1}$. In particular $(Q,\omega,i)$ is a Stein manifold \cite{Eliashberg_Gromov_Convex_symp_mfds}. It follows that it has finite geometry at infinity \cite{Eliashberg_Thurston_Confoliations, Ekholm_Etnyre_Sullivan_LCH_P_times_R}.
\end{itemize}

In \cite{Seidel_Lag_2_spheres_symp_knotted, Seidel_Graded_Lags} Seidel showed that $Q = Q_m$ contains an $A_m$-configura\-tion of Lagrangian spheres $K_1,\dots,K_m$. More precisely, there are Lagrangian embeddings $\ell_j \fc S^n \hookrightarrow Q$, such that their images $K_j = \ell_j(S^n)$ satisfy
$$|K_i\cap K_j| = \left\{ \begin{array}{ll} 0 & \text{if }|i-j| > 1 \\ 1 & \text{if }|i - j| = 1\end{array}\right.$$
for $i \neq j$.

For our purposes we use the construction of such an $A_m$-configuration by Khovanov--Seidel \cite[Section 6c]{Khovanov_Seidel_Quivers_HF_braid_gp_actions} via so-called matching cycles (\cite{Seidel_The_Book}). Namely, consider the projection $\zeta \fc Q \to \C$ given by $\zeta(w_0,\dots,w_n) = w_n$. Next, let $\Delta = \{\zeta_i\}_{i=1}^{m+1} \subset \C$ be the set of roots of unity of order $m+1$ and $\{b_i = [\zeta_i,\zeta_{i+1}]\}_{i=1}^m$ be the set of straight segments connecting $\zeta_i$ with $\zeta_{i+1}$. We have
$$\zeta^{-1}(c) \simeq \left\{\begin{array}{ll}T^*S^{n-1} & \text{if }c\neq \zeta_i \text{ for any }i\\ T^*S^{n-1}/0_{S^{n-1}} & \text{if }c = \zeta_i\text{ for some }i\end{array}\right.$$
Here $T^*S^{n-1}/0_{S^{n-1}}$ means that the zero section has been collapsed to a point. Denote by $\Sigma_c \subset \zeta^{-1}(c)$ the zero section or the point corresponding to the collapsed zero section. For every $i = 1,\dots,m$ put $K_i = \bigsqcup_{c \in b_i}\Sigma_c$.

\subsection{Generalized Dehn twists and braids}

Given a Lagrangian embedding $\ell \fc S^n \to P$, we have the associated generalized Dehn twist along $K = \ell(S^n)$, which we denoted $\tau_K$. This symplectomorphism is trivial outside a tubular neighborhood of $K$ and its restriction to $K$ is the antipodal involution of $S^n \simeq K$, see \cite[Section 6]{Seidel_Lag_2_spheres_symp_knotted}, \cite[Section 5]{Seidel_Graded_Lags}.

Fix $m \in \N$. Recall the configuration $\Delta,b_1,\dots,b_m \subset \C$ above and let $K_1,\dots,K_m$ be the corresponding $A_m$-configuration of Lagrangian spheres in $Q$. Let $\Diff(\C;\Delta)$ be the group of compactly supported diffeomorphisms of $\C$ fixing $\Delta$ as a set. One can identify $B_{m+1}$ with $\pi_0\Diff(\C;\Delta)$ by sending $\sigma_i$ to the isotopy class $[t_{b_i}]$, where $t_{b_i} \in \Diff(\C;\Delta)$ is the right-handed half-twist along $b_i$, for $i = 1,\dots,m$.

In \cite[Section 6b]{Khovanov_Seidel_Quivers_HF_braid_gp_actions} Khovanov--Seidel constructed a monomorphism of groups
$$\rho_{\Symp} \fc B_{m+1} \simeq \pi_0\Diff(\C;\Delta) \rightarrow \pi_0\Symp(Q,\omega)\,,$$
by putting $\rho_{\Symp}(\sigma_i) = [\tau_{K_i}]$. This in fact is the monodromy map of a canonical symplectic fibration over the configuration space $\Conf_{m+1}(\C)$ with fiber $(Q,\omega)$. The injectivity of $\rho_{\Symp}$ follows from the beautiful
\begin{thm}[\cite{Khovanov_Seidel_Quivers_HF_braid_gp_actions}, Theorem 1.3]\label{thm:Khovanov_Seidel} Let $\sigma \in B_{m+1}$ correspond to the class of $f_\sigma \in \Diff(\C;\Delta)$. Then for any representative $\phi_\sigma \in \Symp(Q,\omega)$ of the class $\rho_{\Symp}(\sigma) \in \pi_0\Symp(Q,\omega)$ we have
$$\dim_{\Z_2}HF_*(K_i,\phi_\sigma(K_j)) = 2I(b_i,f_\sigma(b_j))\,,$$
where $I(b_i,f_\sigma(b_j))$ is the geometric intersection number.\footnote{Defined \emph{ibid.}}
\end{thm}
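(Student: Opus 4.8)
The plan is to exploit the Lefschetz fibration $\zeta \fc Q \to \C$ recalled in Section \ref{subsec:Milnor_fiber}, whose regular fibres are $T^*S^{n-1}$ and whose critical values are exactly $\Delta$, and to compute $HF_*(K_i,\phi_\sigma(K_j))$ in a fibred manner, separating the base combinatorics of the arcs from a fixed fibre computation. Since $K_i$ is the matching cycle over $b_i$, and $\phi_\sigma$ may be taken to cover the base diffeomorphism $f_\sigma$, the Lagrangian $\phi_\sigma(K_j)$ is the matching cycle over $f_\sigma(b_j)$. First I would reduce to a convenient configuration: by the Hamiltonian invariance of Lagrangian Floer homology recorded in Section \ref{sec:prelims}, I am free to move $\phi_\sigma(K_j)$ by a Hamiltonian isotopy, and I would choose an isotopy of $\C$ supported away from $\Delta$ that brings $f_\sigma(b_j)$ into minimal position with $b_i$, so that the two arcs meet transversely in exactly $I(b_i,f_\sigma(b_j))$ interior points and share only unavoidable endpoints in $\Delta$. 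Such a base isotopy lifts, by a fibred Moser argument, to a Hamiltonian isotopy of $Q$ carrying $\phi_\sigma(K_j)$ to the matching cycle over the new arc, and this changes neither side of the claimed equality. I would then fix an almost complex structure $J$ on $Q$ adapted to $\zeta$, so that $\zeta$ is $(J,i)$-holomorphic; this is the device that lets base and fibre be analysed separately.

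Next I would count the generators of the Floer complex. After the reduction, $K_i \cap \phi_\sigma(K_j)$ lies over the finitely many transverse crossings of $b_i$ and $f_\sigma(b_j)$ in the base. Over each such crossing $c$ the two matching cycles restrict to two Lagrangian spheres in the fibre $\zeta^{-1}(c) \cong T^*S^{n-1}$, namely the two vanishing cycles, each Hamiltonian isotopic to the zero section; after a small fibrewise perturbation these meet in exactly $\dim_{\Z_2} H_*(S^{n-1};\Z_2) = 2$ points. Hence $CF_*(K_i,\phi_\sigma(K_j))$ has precisely $2\,I(b_i,f_\sigma(b_j))$ generators, and the entire content of the theorem is the vanishing of the Floer differential.

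To establish that vanishing I would project holomorphic strips to the base. Because $J$ is adapted to $\zeta$, any Floer strip $u$ descends to a holomorphic strip $\zeta \circ u$ in $\C$ with boundary on $b_i$ and $f_\sigma(b_j)$; a non-constant such strip would produce an immersed bigon between the two arcs, which is impossible once they are in minimal position, by the bigon criterion. Therefore $\zeta \circ u$ is constant, $u$ lies in a single fibre $T^*S^{n-1}$, and the total differential is the direct sum over the base crossings of the fibrewise Floer differentials of two copies of the zero section. By the computation of the Floer homology of the zero section in a cotangent bundle this fibre homology is the Morse homology $H_*(S^{n-1};\Z_2)$ of a perfect Morse function, whose differential vanishes. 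Consequently $HF_*(K_i,\phi_\sigma(K_j)) = CF_*(K_i,\phi_\sigma(K_j))$ and $\dim_{\Z_2} HF_* = 2\,I(b_i,f_\sigma(b_j))$.

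The main obstacle I anticipate is making the projection step fully rigorous in the presence of the singular fibres over $\Delta$ and the noncompactness of $Q$. One must control strips whose boundary or interior approaches a critical value, where the fibre degenerates and the arcs have their endpoints; set up the local model near each critical point; and verify compactness and transversality for the non-generic adapted $J$, perturbing it within the class of fibred structures so as not to destroy the base projection. A second delicate point, closely tied to the factor of $2$, is to confirm that over each crossing the two vanishing cycles are genuinely standard, so that the fibre Floer homology is exactly $H_*(S^{n-1};\Z_2)$ with no extra generators or differentials, and to check that the endpoint intersections in $\Delta$ contribute as dictated by the chosen definition of the geometric intersection number $I$. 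Once these local and fibred inputs are in place, the bigon argument and the dimension count are comparatively formal.
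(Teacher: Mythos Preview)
This theorem is not proved in the present paper: it is quoted as Theorem~1.3 of Khovanov--Seidel and then used as a black box in Section~\ref{sec:prf}. There is therefore no proof here against which to compare your proposal.

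For context, your sketch follows a fibred strategy---project Floer strips via the Lefschetz fibration $\zeta$, invoke the bigon criterion in the base, and reduce to a cotangent-fibre computation---that is broadly plausible and close in spirit to later treatments of matching cycles, but it is not how Khovanov--Seidel actually argue. Their proof takes an algebraic detour: they introduce a bigraded refinement of the geometric intersection number of curves in the punctured disk, construct an explicit zig-zag quiver algebra, show that graded Hom/Ext groups in a derived category of modules over this algebra reproduce the bigraded intersection numbers, and separately identify the Floer groups of the matching spheres with the same module-theoretic invariants via the long exact sequence for Dehn twists. The stated equality then drops out after collapsing the bigrading. Your approach, if made rigorous, would be more direct and geometric, but you have correctly flagged the genuine obstacles: controlling strips near the singular fibres and at shared endpoints in $\Delta$ (where $I$ is defined with weight $\tfrac12$, so the local contribution to $CF_*$ must be a single generator rather than two), and obtaining transversality for a fibred almost complex structure. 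These are precisely the places where a naive fibred argument becomes delicate, and part of the reason the original paper takes the algebraic route.
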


% Recall that the subgroup of pure braids enters in the short exact sequence of groups
% $$1 \to P_{m+1} \to B_{m+1} \to S_{m+1} \to 1\,,$$
% where $S_{m+1}$ is the symmetric group \cite{Birman_Braids_links_MCGs}. The subgroup $P_{m+1}$ corresponds under the above identification $B_{m+1} \simeq \pi_0\Diff(\C;\Delta)$ to the subgroup
% $$\{[f] \in \pi_0\Diff(\C;\Delta)\,|\;\; f|_\Delta = \id\}\,.$$
% Recall as well that $P_{m+1} = \langle \sigma_1^2,\dots,\sigma_m^2 \rangle^{B_{m+1}}$, that is it is the normal closure of the subgroup generated by the squares of the Artin generators.
% 
% 
% 
% If $P$ is $4$-dimensional, then for a Lagrangian sphere $K \subset P$ the square $\tau_K^2$ is smoothly isotopic to the identity by a compactly supported isotopy \cite[Lemma 6.3]{Seidel_Lag_2_spheres_symp_knotted}. Thus in case $P = Q = Q_m$ we have $P_{m+1} \leq \ker \beta_{\Symp}$, where
% $$\beta_{\Symp} \fc \pi_0\Symp(Q,\omega) \to \pi_0\Diff^+(Q)$$
% is the forgetful homomorphism induced by the forgetful embedding
% $$\Symp(Q,\omega) \hookrightarrow \Diff^+(Q)\,.$$

\subsection{Legendrian contact homology}

\subsubsection{The Chekanov--Eliashberg dga of a Legendrian knot}

The Chekanov--Eliashberg differential graded algebra (dga) of a Legendrian knot $L \subset P \times \R$ was constructed by Ekholm--Etnyre--Sullivan \cite{Ekholm_Etnyre_Sullivan_LCH_R2n_plus_1, Ekholm_Etnyre_Sullivan_LCH_P_times_R}. Here we recall the basic notions and construction and refer the reader to the aforementioned papers and references therein for details. We assume throughout that our Legendrians are closed and simply connected.

The technical assumptions one needs to impose are the following:
\begin{itemize}
 \item $L$ is chord-generic, meaning $\Pi|_L$ is an immersion with transverse double points;
 \item There is an $\omega$-compatible almost complex structure $J$ on $P$ which is adapted to $L$, meaning that $(P,\omega,J)$ has finite geometry at infinity,\footnote{This $J$ is required to coincide with $I_0$ outside of a compact subset.} and there is a coordinate system around each double point of $\Pi(L)$ in which $J$ coincides with the standard almost complex structure on $\C^n$;
 \item $L$ is admissible with respect to $J$ meaning that around each double point of $\Pi(L)$ the two branches meeting at it are real analytic submanifolds in the above coordinate system.
\end{itemize}
Note that when $L$ is chord-generic, its Reeb chords are graded by the Maslov index taking values in $\Z$. For a Reeb chord $a$ we let $|a|$ be its degree. We denote words on Reeb chords by an arrow, for instance $\vec b = b_1\dots b_k$. Such words are assigned a degree via $|b_1\dots b_k| = \sum_j |b_j|$.

For Reeb chords $a,b_1,\dots,b_k$ of $L$ (where we allow $k=0$), we have the moduli space $\cM(a;\vec b)$, where $\vec b = b_1\dots b_k$. This is the set, modulo conformal reparametrization, of $J$-holomorphic disks $u \fc (D_k,\partial D_k) \to (P,\Pi(L))$, where $D_k \subset \C$ is the closed unit disk with $k+1$ boundary punctures $z_0,\dots,z_k$, and where the map $u$ is required to extend to a continuous map $\wh u \fc (D^2,\partial D^2) \to (P,\Pi(L))$, satisfying $\wh u(z_0) = \Pi(a)$, $\wh u(z_j) = \Pi(b_j)$ for all $j$, and to lift to a continuous map $(D_k,\partial D_k) \to (P \times \R, L)$. Moreover $a$ is a positive puncture of $u$ while the $b_j$ are all negative punctures. Under the above technical assumptions this is a smooth manifold of dimension $|a| - |\vec b| - 1$, provided this number is at most $1$, for an open and dense set of $J$ which are adapted to $L$ and for which $L$ is admissible; the same holds true if we fix $J$ adapted to $L$ and generically perturb $L$ among Legendrians $L'$ for which $J$ remains adapted and such that $L'$ is admissible for $J$. We refer to $J$ such that this holds regular. Whenever $\cM(a;\vec b)$ is zero-dimensional, it is compact, and therefore a finite set of points.

Fix a chord-generic $L$ and a regular compatible almost complex structure $J$ which is adapted to $L$, and for which $L$ is admissible. The Chekanov--Eliashberg dga associated to $L$ is defined as follows. The underlying algebra $\cA_L$ is the semi-free unital associative algebra over $\Z_2$ with generators the Reeb chords of $L$. It is graded by $|\cdot|$. The differential is defined as follows: for a Reeb chord $a$ of $L$ put
$$\partial_{L,J}(a) = \sum_{\vec b:|a| - |\vec b| = 1}\#_2\cM(a;\vec b)\vec b\,.$$
It is proved in \cite{Ekholm_Etnyre_Sullivan_LCH_R2n_plus_1, Ekholm_Etnyre_Sullivan_LCH_P_times_R} that $\partial_{L,J}^2 = 0$. It follows from the definition that $\partial_{L,J}$ has degree $-1$.

There are notions of a tame isomorphism and stabilization of dgas, see \cite{Chekanov_Differential_algebra_Leg_links}. Two dgas are called stable tame isomorphic if, after being stabilized a number of times, they become tame isomorphic. If $(L_i,J_i)$, $i=0,1$ are pairs of chord-generic Legendrians and adapted almost complex structures with respect to which the Legendrians are admissible, such that $L_0,L_1$ are Legendrian isotopic, then the dgas $(\cA_{L_0},\partial_{L_0,J_0})$, $(\cA_{L_1},\partial_{L_1,J_1})$ are stable tame isomorphic \cite{Ekholm_Etnyre_Sullivan_LCH_R2n_plus_1, Ekholm_Etnyre_Sullivan_LCH_P_times_R}.

If $L$ is any Legendrian, there exists an arbitrarily small Legendrian isotopy connecting it to a chord-generic $L'$ such that there is an adapted regular $J$ with respect to which $L'$ is admissible \cite[Lemma 2.7]{Ekholm_Etnyre_Sullivan_LCH_P_times_R}. It follows that we can associate a stable tame isomorphism class of dgas to any Legendrian submanifold of $P$.

An augmentation of a semi-free dga $(\cA,\partial)$ with generating set $\mathsf A$ is a dga morphism $\epsilon \fc (\cA,\partial) \to (\Z_2,0)$. Given an augmentation $\epsilon$, the corresponding linearized complex is defined as follows. Its underlying vector space is $\cA^{(1)} = \Z_2 \otimes \mathsf A$, while the linearized differential is given by $\partial^\epsilon = \pi_1 \circ \phi^\epsilon \circ \partial$, where $\pi_1 \fc \cA \to \cA^{(1)}$ is the projection onto the subspace generated by words of length $1$, while $\phi^\epsilon$ is the graded linear automorphism determined by $\phi^\epsilon(1) = 1$ and $\phi^\epsilon(a) = a + \epsilon(a)$ for each $a \in \mathsf A$. It can be shown that $\partial^\epsilon$ squares to zero. The homology of the resulting complex $(\cA^{(1)},\partial^\epsilon)$ is the linearized homology associated to $\epsilon$. Chekanov proved \cite{Chekanov_Differential_algebra_Leg_links} that the set of linearized homologies associated to all possible augmentations of a dga is an invariant of its stable tame isomorphism type.

One special case is important for us. Assume $L$ is a Legendrian lift of a Lagrangian submanifold $K \subset P$. The set of Reeb chords of $L$ is of course empty, therefore the corresponding dga is trivial: $(\cA_L,\partial_L) \equiv (\Z_2,0)$. It follows that the linearized homology (associated to the unique augmentation) is zero.

Assume now that $L'$ is chord-generic, Legendrian isotopic to $L$, and that $J$ is an adapted regular almost complex structure with respect to which $L'$ is admissible. Then the associated dga $(\cA_{L'},\partial_{L',J})$ is stable tame isomorphic to $(\Z_2,0)$ and therefore the set of its linearized homologies includes only one element, namely the zero vector space.

\subsubsection{The dg bimodule of a two-component Legendrian link}

If $L,L' \subset P \times \R$ are Legendrians, we say that $L'$ is above $L$, and write $L' > L$, if $z(L') > z(L)$. We call the pair $(L,L')$ chord-generic if the Legendrian link $L \cup L'$ is. In this subsection we describe an invariant of the pair $(L,L')$, in the spirit of the Chekanov--Eliashberg dga. This will take the form of a semi-free differential graded bimodule over the dg algebras $\cA_L$, $\cA_{L'}$.

The idea to use the dga of the union of two Legendrians which lie above one another appears, for instance, in \cite{Ekholm_Etnyre_Sabloff_Duality_exact_seq_LCH}. Dg bimodules we use here appeared, for instance, in \cite{DimitroglouRizell_Golovko_Estimating_num_Reeb_chords_reps_char_algebra}.

Fix a chord-generic pair $L,L'$ such that $L' > L$. Let $\mathsf A,\mathsf A', \mathsf B$ be the sets of Reeb self-chords of $L,L'$, and of mixed Reeb chords from $L$ to $L'$, respectively. For any two chords $b,b' \in \mathsf B$ the difference of their degrees $|b| - |b'| \in \Z$ is well-defined using the Maslov--Viterbo index. Since an absolute grading on $\mathsf B$ depends on the choice of a normalization, and since it plays no role in our constructions, we say that $\mathsf B$ is graded by a $\Z$-torsor, or simply that it is graded. Graded maps are those ones respecting this grading.

We let $\cB_{L,L'}$ be the $\Z_2$-vector space generated by monomials of the form $\vec a b \vec a'$, where $\vec a,\vec a'$ are words (possibly empty) on the alphabets $\mathsf A,\mathsf A'$, while $b \in \mathsf B$. This is the semi-free graded $(\cA_L,\cA_{L'})$-bimodule with generating set $\mathsf B$, where the grading is defined via $|\vec a b \vec a'| = |\vec a| + |b| + |\vec a'|$.

The notion of an $\omega$-compatible almost complex structure $J$ adapted to $L \cup L'$ is the same as above; similarly we can define what it means for $L \cup L'$ to be admissible with respect to $J$. We conclude that the moduli space of disks $\cM(c;\vec d)$ can be defined as above. The following elementary observation is crucial for our construction.
\begin{lemma}\label{lem:Legs_one_above_another}
Let $c,d_1,\dots,d_k$ be Reeb chords of $L \cup L'$. Then $\cM(c;\vec d)$ is empty unless
\begin{itemize}
 \item $c \in \mathsf A$ and $d_1,\dots,d_k \in \mathsf A$ or
 \item $c \in \mathsf A'$ and $d_1,\dots,d_k \in \mathsf A'$ or
 \item $c \in \mathsf B$ and there is a unique $1 \leq j \leq k$ such that $d_1,\dots,d_{j-1} \in \mathsf A$, $d_j \in \mathsf B$, and $d_{j+1},\dots,d_k \in \mathsf A'$.
\end{itemize}
\end{lemma}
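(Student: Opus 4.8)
The plan is to read off the combinatorial type of a disk in $\cM(c;\vec d)$ from the pattern of boundary labels of its lift, and to exploit the presence of a \emph{single} positive puncture together with the hypothesis $L' > L$. First I would fix a disk $u \in \cM(c;\vec d)$ together with its continuous lift $\wt u \fc (D_k,\partial D_k) \to (P\times\R, L\cup L')$, and consider the $k+1$ boundary arcs cut out by the punctures $z_0,\dots,z_k$. Since $L' > L$, the Legendrians $L$ and $L'$ are disjoint closed submanifolds of $P\times\R$, so any connected subset of $L \cup L'$ lies entirely in one of them; as each open boundary arc is connected and $\wt u$ is continuous, $\wt u$ sends it either into $L$ or into $L'$, assigning to every arc a label in $\{L,L'\}$. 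The labels of the two arcs abutting a puncture $z_j$ determine the type of the corresponding chord: two $L$-arcs give a self-chord in $\mathsf A$, two $L'$-arcs give a self-chord in $\mathsf A'$, and one of each gives a mixed chord in $\mathsf B$. In particular the label is locally constant along $\partial D_k$ and can change only across a mixed puncture.

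Next I would pin down the \emph{direction} in which the label switches at a mixed puncture. Because $R_\alpha = \partial_z$ and $L' > L$, every mixed chord runs from its lower endpoint, which lies on $L$, to its upper endpoint, which lies on $L'$. On the other hand, the sign of a puncture fixes which endpoint of the chord is reached by the incoming boundary arc and which is left by the outgoing one, with respect to the boundary orientation. This is the standard asymptotic normal form at a Reeb-chord puncture, and it is also forced by positivity of symplectic area: on the boundary $\theta$ restricts to $d(z\circ\wt u)$, so by Stokes $\int_D u^*\omega = \int_{\partial D}d(z\circ \wt u)$ telescopes to the length of $c$ minus the sum of the lengths of the $d_j$, which is positive. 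With the conventions fixing $\cM(c;\vec d)$ this means that at the positive puncture the incoming arc reaches the top of the chord and the outgoing arc leaves from the bottom, while at each negative puncture the incoming arc reaches the bottom and the outgoing arc leaves from the top. Combined with the direction of mixed chords, a positive mixed puncture switches the label $L'\rightsquigarrow L$, whereas a negative mixed puncture switches it $L\rightsquigarrow L'$.

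Finally I would traverse $\partial D_k$ once counterclockwise, tracking the label as a state in $\{L,L'\}$ and using that $z_0$ is the only positive puncture. If $c\in\mathsf A$ (resp.\ $c\in\mathsf A'$), the two arcs at $z_0$ are labeled $L$ (resp.\ $L'$), so the state equals $L$ (resp.\ $L'$) both just before and just after $z_0$; since the only switch available along the remainder of the loop is the negative one $L\rightsquigarrow L'$, and there is no way back once the state is $L'$ (the return switch $L'\rightsquigarrow L$ occurs only at a positive puncture, of which none remain), consistency forces the state to be constant, whence every $d_j$ lies in $\mathsf A$ (resp.\ $\mathsf A'$). If $c\in\mathsf B$, the positive switch at $z_0$ puts the state into $L$ just after $z_0$ and requires it to be $L'$ just before $z_0$; traversing the negative punctures, the state remains $L$ through self-chords of $L$, must switch to $L'$ at exactly one negative mixed puncture $d_j$, and then remains $L'$ through self-chords of $L'$ until the loop closes up. This is precisely the third case, with $d_1,\dots,d_{j-1}\in\mathsf A$, $d_j\in\mathsf B$, $d_{j+1},\dots,d_k\in\mathsf A'$, and it shows in particular that the number of mixed chords is either zero or two.

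The step I expect to be the crux is establishing the directionality of the label switches, that is, matching the top and bottom endpoints of each chord to the incoming and outgoing boundary arcs according to the sign of the puncture. This is exactly the input that rules out four or more mixed chords and that produces the specific order $\mathsf A,\mathsf B,\mathsf A'$ in the third case; everything else is bookkeeping around the boundary circle. I would justify it either from the local holomorphic model at a Reeb-chord puncture used in \cite{Ekholm_Etnyre_Sullivan_LCH_P_times_R}, or from the area identity above, taking care that the orientation and sign conventions are the ones fixed for $\cM(c;\vec d)$.
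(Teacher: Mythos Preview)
Your argument is correct and is precisely the unpacking of the paper's one-line proof, which reads in its entirety: ``This follows from the assumption that $c$ is the unique positive puncture and that $L'$ is above $L$.'' The directionality of the label switches at mixed punctures that you single out as the crux is indeed the content behind that sentence, and your justification via the action/area identity (or equivalently the asymptotic normal form) is the standard way to see it.
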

\begin{proof}
This follows from the assumption that $c$ is the unique positive puncture and that $L'$ is above $L$.
\end{proof}

It follows that we can define the differential $\partial_{L,L',J}$ on $\cB_{L,L'}$ by the same formula
$$\partial_{L,L',J}(b) = \sum_{\vec a,c,\vec a':|b| - |\vec a c \vec a'| = 1} \#_2 \cM(b;\vec a c \vec a')\vec a c \vec a'\,.$$
Using the basic Lemma \ref{lem:Legs_one_above_another}, we can see that the claim that $\partial_{L,L',J}^2 = 0$ can be proved in the usual way. Also, from the definition it follows that this differential has degree $-1$. This completes the definition of the semi-free dg $(\cA_L,\cA_{L'})$-bimodule $(\cB_{L,L'},\partial_{L,L',J})$.

We can define the notion of a tame isomorphism between two semi-free graded bimodules. First, note that a graded algebra isomorphism $\cA \to \cA_1$ induces a graded isomorphism between the semi-free $(\cA,\cA')$- and $(\cA_1,\cA')$-bimodules generated by $\mathsf B$. Similarly we have a bimodule isomorphism induced by a graded algebra isomorphism $\cA' \to \cA'_1$. We say that a graded bimodule automorphism $\phi \fc \cB \to \cB$ of a semi-free $(\cA,\cA')$-bimodule $\cB$ generated by $\mathsf B$ is $\mathsf B$-elementary if there is $b \in \mathsf B$ such that $\phi(\vec a c \vec a') = \vec a c \vec a'$ if $c \neq b$ and $\phi(\vec a b \vec a') = \vec a b \vec a' + b'$ where $b'$ belongs to the semi-free $(\cA,\cA')$-bimodule generated by $\mathsf B - \{b\}$. A graded bimodule automorphism of $\cB$ is $\mathsf B$-tame if it is a composition of $\mathsf B$-elementary automorphisms. Given a semi-free graded $(\cA_0,\cA'_0)$-bimodule $\cB_0$ and a semi-free graded $(\cA_1,\cA_1')$-bimodule $\cB_1$, we call a map $\cB_0 \to \cB_1$ a tame isomorphism if it is given by a composition of isomorphisms induced by tame isomorphisms $\cA_0 \simeq \cA_1$, $\cA_0'\simeq \cA_1'$, and $\mathsf B$-tame automorphisms followed by a renaming of generators. Such an isomorphism is a tame isomorphism of dg bimodules if it is an isomorphism of dg bimodules (over the corresponding dga isomorphisms of their algebras) and it is tame as an isomorphism of graded bimodules.

Similarly we can introduce the notion of stabilization of a semi-free dg $(\cA,\cA')$-bimodule $\cB$, as follows. One kind of stabilization comes from stabilizing the dgas $\cA,\cA'$, in an obvious manner. Another kind of stabilization comes from adding two generators of degree difference $1$ to the generating set of $\cB$, where the stabilized differential maps one generator to the other.

Finally, two dg bimodules are stable tame isomorphic, if they become tame isomorphic (over the corresponding tame isomorphisms of their generating dgas) after being stabilized a number of times.

If $(L_i,L_i',J_i)$, $i=0,1$ are triples consisting of Legendrians and almost complex structures, such that for $i=0,1$ we have that $L_i'$ lies above $L_i$, $L_i \cup L_i'$ is chord-generic, $J_i$ is adapted to $L_i\cup L_i'$, and $L_i \cup L_i'$ is admissible with respect to $J_i$, and such that there is an isotopy of Legendrian links $L_t \cup L_t'$ connecting $L_0 \cup L_0'$ to $L_1 \cup L_1'$, such that $L_t'$ lies above $L_t$ for all $t$, then, using arguments similar to the usual ones in Legendrian contact homology, in conjunction with Lemma \ref{lem:Legs_one_above_another}, we can conclude that the semi-free dg $(\cA_{L_0},\cA_{L_0'})$-bimodule $(\cB_{L_0,L_0'},\partial_{L_0,L_0',J_0})$ is stable tame isomorphic to the semi-free dg $(\cA_{L_1},\cA_{L_1'})$-bimodule $(\cB_{L_1,L_1'},\partial_{L_1,L_1',J_1})$.

Similarly to the case of a Legendrian knot, we can associate a stable tame isomorphism type of dg bimodules to any two-component Legendrian link $L \cup L'$ provided that $L' > L$.

Given a semi-free dg $(\cA,\cA')$-bimodule $(\cB,\partial)$ generated by $\mathsf B$ we can similarly define the linearized complex associated to a pair of augmentations $\epsilon,\epsilon'$ of $\cA,\cA'$, respectively. The underlying vector space is $\cB^{(1)} = \Z_2 \otimes \mathsf B$. The linearized differential is defined by $\partial^{\epsilon,\epsilon'} = \pi \circ \phi^{\epsilon,\epsilon'} \circ \partial$, where $\pi \fc \cB \to \cB^{(1)}$ is the obvious projection while $\phi^{\epsilon,\epsilon'}$ is the graded automorphism of $\cB$ induced by the automorphisms $\phi^\epsilon,\phi^{\epsilon'}$ of $\cA,\cA'$, respectively. It follows that $\partial^{\epsilon,\epsilon'}$ squares to zero. The linearized homology associated to $(\epsilon,\epsilon')$ is the homology of $(\cB^{(1)}, \partial^{\epsilon,\epsilon'})$. By an argument analogous to the one used in the case of dgas, it follows that set of linearized homologies is an invariant of the stable tame isomorphism type of the dg bimodule.

We have the following observation.
\begin{prop}Let $L,L'$ be Legendrian lifts of exact Lagrangians $K,K'$ which intersect transversely, such that $L' > L$. Then the set of linearized homologies of the associated dg bimodule $\cB_{L,L'}$ contains a single element canonically isomorphic to $HF_*(K,K')$.
\end{prop}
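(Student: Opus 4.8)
The plan is to observe that, under the hypotheses, both the algebraic ambiguity on the left-hand side collapses and the resulting complex coincides on the nose with the Lagrangian Floer complex of $K,K'$. I split the argument into an analysis of the generators, an identification of the differentials, and the geometric comparison of moduli spaces.

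First I would pin down the generators. Since $L$ (resp.\ $L'$) is a Legendrian lift of the embedded exact Lagrangian $K$ (resp.\ $K'$), the projection $\Pi|_L$ (resp.\ $\Pi|_{L'}$) is injective, so $L,L'$ have no Reeb self-chords: $\mathsf A = \mathsf A' = \emptyset$. Hence $\cA_L = \cA_{L'} = (\Z_2,0)$, each admitting a \emph{unique} augmentation. There is therefore exactly one admissible pair $(\epsilon,\epsilon')$, so the set of linearized homologies of $\cB_{L,L'}$ has a single element; this already accounts for the word ``single'' in the statement. For the bimodule generators, a mixed Reeb chord lies over a point $p \in \Pi(L) \cap \Pi(L') = K \cap K'$, and over each such $p$ there is exactly one point of $L$ and one of $L'$; since $L' > L$ the unique chord runs upward from $L$ to $L'$. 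Thus $\mathsf B$ is canonically identified with $K \cap K'$, the chords being nondegenerate precisely because $K \pitchfork K'$, which also guarantees chord-genericity of $L \cup L'$.

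Second I would identify the differentials. Because $\cA_L,\cA_{L'}$ have no generators, the induced automorphism is $\phi^{\epsilon,\epsilon'} = \id$, so $\partial^{\epsilon,\epsilon'} = \pi \circ \partial_{L,L',J}$. By Lemma \ref{lem:Legs_one_above_another} together with $\mathsf A = \mathsf A' = \emptyset$, any contributing disk has its single positive and single negative puncture both in $\mathsf B$ and no pure punctures, so $\partial_{L,L',J}(b) = \sum_{c : |b|-|c|=1}\#_2\cM(b;c)\,c$ already lands in $\cB^{(1)}$ and equals $\partial^{\epsilon,\epsilon'}$. The geometric heart is then to match $\cM(b;c)$ with the Floer moduli space: under $\Pi$ a disk in $\cM(b;c)$ (two boundary punctures, hence a strip) becomes a $J$-holomorphic strip in $P$ with boundary arcs on $K$ and $K'$ asymptotic to the intersection points corresponding to $b,c$; conversely, exactness of $K,K'$ (via the primitives $F,F'$ defining the lifts) makes every such Floer strip lift to $P \times \R$ with boundary on $L \cup L'$, the vertical jumps at the ends being exactly the chord lengths, so the lifting condition is automatic. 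Choosing one and the same $J$ on $P$ for both theories makes the two moduli spaces literally equal, with matching relative Maslov--Viterbo gradings. Hence $(\cB^{(1)},\partial^{\epsilon,\epsilon'})$ is identified with $(CF_*(K,K'),\partial)$ under $\mathsf B \leftrightarrow K \cap K'$, and passing to homology yields the desired canonical isomorphism with $HF_*(K,K')$.

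The main obstacle I anticipate is exactly this last identification of moduli spaces: arranging a single $J$, and possibly a small Legendrian perturbation of $L \cup L'$ to secure chord-genericity and admissibility with real-analytic branches, that is simultaneously regular for the LCH disks and for the Floer strips. Since the linearized homology of $\cB_{L,L'}$ and $HF_*(K,K')$ are each independent of these auxiliary choices, such a perturbation is harmless, but one must verify that it can be taken to preserve the relation $L' > L$, so that Lemma \ref{lem:Legs_one_above_another} keeps applying throughout and no pure punctures are reintroduced. The orientation and sign bookkeeping that normally complicates such a comparison is absent here, as we work over $\Z_2$.
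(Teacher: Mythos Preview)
Your proposal is correct and follows essentially the same route as the paper's proof: trivialize $\cA_L,\cA_{L'}$ from the absence of self-chords, identify $\mathsf B$ with $K\cap K'$, and match the bimodule differential with the Floer differential via the identification $\cM(b;c)\simeq\cM(p;p')$ for a common $J$. The paper is terser about the lifting of Floer strips and the perturbation step, but the structure and key ideas are the same.
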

\begin{proof}
If necessary, perturb $L,L'$ so that there is an almost complex structure $J$ adapted to $L \cup L'$ and so that $L \cup L'$ is admissible with respect to $J$, and so that their projections to $P$ are still embedded. The algebras $\cA_L,\cA_{L'}$ are trivial, therefore admit unique augmentations, and the dg bimodule is just the $\Z_2$-vector space spanned by the set of mixed chords $\mathsf B$, which is in bijection with $K \cap K'$. It can be seen that if $b,b' \in \mathsf B$ and $p=\Pi(b),p'=\Pi(b')$ are the corresponding intersection points of $K,K'$, then there is a canonical identification $\cM(b,b') \simeq \cM(p,p')$, where we use $J$ to define both spaces, and therefore the differential on $\cB_{L,L'} = \Z_2 \otimes \mathsf B \simeq \Z_2 \otimes (K_0 \cap K_1)$ coincides with the Floer differential. It then follows that $\cB_{L,L'}$ coincides with its own linearization, and in turn coincides with the Floer complex of $K,K'$.
\end{proof}
\begin{coroll}\label{cor:linearized_LCH_equal_HF_isotopy}
Let $K,K',K''$ be Lagrangians such that $K$ intersects both $K'$ and $K''$ transversely. Assume $L,L',L''$ are Legendrian lifts of $K,K',K''$, respectively, such that there is a Legendrian isotopy $\{L'_t\}_{t \in [0,1]}$ with $L_0' = L'$, $L_1'=L''$ and $L_t' > L$ for all $t$. Then $HF_*(K,K') \simeq HF_*(K,K'')$. \qed
\end{coroll}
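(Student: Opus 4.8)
The plan is to deduce the isomorphism purely from the stable tame isomorphism invariance of the dg bimodule together with the Proposition immediately preceding the corollary, which identifies, for a transversely intersecting pair of Lagrangians whose Legendrian lifts lie one above the other, the (necessarily singleton) set of linearized homologies of the associated bimodule with the Floer homology of the Lagrangian pair. Since the set of linearized homologies is by construction an invariant of the stable tame isomorphism type of a dg bimodule, it suffices to exhibit a stable tame isomorphism between $\cB_{L,L'}$ and $\cB_{L,L''}$, and this will be furnished by the given Legendrian isotopy.

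Concretely, I would first apply the Proposition to the pair $(L,L')$, using that $K$ and $K'$ intersect transversely, to conclude that the set of linearized homologies of $\cB_{L,L'}$ is the singleton canonically isomorphic to $HF_*(K,K')$; applying it to $(L,L'')$, using transversality of $K$ and $K''$, yields the singleton $HF_*(K,K'')$ for $\cB_{L,L''}$. Next I would view the given Legendrian isotopy $\{L_t'\}_{t\in[0,1]}$ as an isotopy of two-component Legendrian links $L \cup L_t'$ in which the lower component is held fixed at $L$. Because $L_t' > L$ for every $t$ by hypothesis, this isotopy meets exactly the condition demanded by the invariance statement for dg bimodules---the upper component stays above the lower one throughout---so $\cB_{L,L'}$ and $\cB_{L,L''}$ are stable tame isomorphic. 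Their sets of linearized homologies therefore coincide, and comparing the two singletons gives $HF_*(K,K') \simeq HF_*(K,K'')$.

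The only points requiring care are the genericity and admissibility conditions needed to invoke the two results in turn. To apply the Proposition at each endpoint I would, if necessary, perturb $L'$ and $L''$ (through the initial and terminal parts of the isotopy) so that $L \cup L'$ and $L \cup L''$ are chord-generic and admissible for an adapted regular almost complex structure, with the Lagrangian projections still embedded, exactly as in the proof of the Proposition; and to apply the invariance statement I would, if necessary, perturb the isotopy itself to a chord-generic, admissible one whose endpoints agree with these representatives. None of these perturbations disturbs the relation $L_t' > L$, which is the essential geometric input. I therefore expect the main---and essentially bookkeeping---obstacle to be arranging the perturbations compatibly at the endpoints and along the isotopy, rather than any genuine analytic difficulty.
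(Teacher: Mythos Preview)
Your proposal is correct and is exactly the argument the paper intends: the corollary is marked with \qed and given no separate proof because it follows immediately from the preceding Proposition together with the stable tame isomorphism invariance of the dg bimodule under isotopies keeping $L_t' > L$. Your remarks about arranging the genericity/admissibility perturbations are the appropriate bookkeeping and introduce no new ideas beyond what the paper already records.
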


\section{Proof of Theorem \ref{thm:main_thm}}\label{sec:prf}

We must show that if $\sigma \in B_{m+1}$ is such that $\rho_{\Cont}(\sigma) \in \pi_0\Cont(V,\xi)$ is the trivial class, then $\sigma$ is the trivial braid. Fix a representative $\phi_\sigma$ of $\rho_{\Symp}(\sigma)$. Choose a contact isotopy starting at $\id_V$ and ending at $\wh{\phi_\sigma}$, and let $\{\wh\phi_t\}_{t\in[0,1]}$ be its unique lift to a contact isotopy of $Q \times \R$ with support in a set of the form $C\times \R$, with $C \subset Q$ compact.

Recall the Lagrangian spheres $K_1,\dots,K_m \subset Q$ introduced in Subsection \ref{subsec:Milnor_fiber}. Fix $i,j = 1,\dots,m$ and let $L_i,L_j$ be Legendrian lifts of $K_i,K_j$ to $Q \times \R$ such that $\wh\phi_t(L_j) > L_i$ for all $t$. By Corollary \ref{cor:linearized_LCH_equal_HF_isotopy} we see that
$$HF_*(K_i,K_j) \simeq HF_*(K_i,\phi_\sigma(K_j))\,.$$
A similar argument shows that
$$HF_*(K_i,K_j) \simeq HF_*(K_i,\phi_{\sigma^2}(K_j))\,.$$
It follows from Theorem \ref{thm:Khovanov_Seidel} that
$$I(b_i,b_j) = I(b_i,f_\sigma(b_j)) = I(b_i,f_{\sigma^2}(b_j))$$
for all $i,j$, and then Lemma 3.6 in \cite{Khovanov_Seidel_Quivers_HF_braid_gp_actions} implies that $\sigma$ is the trivial braid. This completes the proof.

\bibliography{biblio}

\def\cprime{$'$}
\begin{thebibliography}{10}

\bibitem{AGLV_Singularity_theory_I}
V.~I. Arnold, V.~V. Goryunov, O.~V. Lyashko, and V.~A. Vasil{\cprime}ev.
\newblock {\em Singularity theory. {I}}.
\newblock Springer-Verlag, Berlin, 1998.
\newblock Translated from the 1988 Russian original by A. Iacob, Reprint of the
  original English edition from the series Encyclopaedia of Mathematical
  Sciences [in Dynamical systems. VI, Encyclopaedia Math. Sci., 6, Springer,
  Berlin, 1993].

\bibitem{Birman_Braids_links_MCGs}
Joan~S. Birman.
\newblock {\em Braids, links, and mapping class groups}.
\newblock Princeton University Press, Princeton, N.J.; University of Tokyo
  Press, Tokyo, 1974.
\newblock Annals of Mathematics Studies, No. 82.

\bibitem{Casals_Spacil_Chern_Weil_thry_Strict_Cont}
Roger Casals and Old{\v{r}}ich Sp{\'a}{\v{c}}il.
\newblock Chern-{W}eil theory and the group of strict contactomorphisms.
\newblock {\em J. Topol. Anal.}, 8(1):59--87, 2016.

\bibitem{Chekanov_Differential_algebra_Leg_links}
Yuri Chekanov.
\newblock Differential algebra of {L}egendrian links.
\newblock {\em Invent. Math.}, 150(3):441--483, 2002.

\bibitem{DimitroglouRizell_Golovko_Estimating_num_Reeb_chords_reps_char_algebra}
Georgios Dimitroglou~Rizell and Roman Golovko.
\newblock Estimating the number of {R}eeb chords using a linear representation
  of the characteristic algebra.
\newblock 2015.
\newblock {\tt arXiv:1409.6278}.

\bibitem{Ekholm_Etnyre_Sullivan_LCH_R2n_plus_1}
Tobias Ekholm, John Etnyre, and Michael Sullivan.
\newblock The contact homology of {L}egendrian submanifolds in {${\mathbb
  R}^{2n+1}$}.
\newblock {\em J. Differential Geom.}, 71(2):177--305, 2005.

\bibitem{Ekholm_Etnyre_Sullivan_LCH_P_times_R}
Tobias Ekholm, John Etnyre, and Michael Sullivan.
\newblock Legendrian contact homology in {$P\times\mathbb{R}$}.
\newblock {\em Trans. Amer. Math. Soc.}, 359(7):3301--3335 (electronic), 2007.

\bibitem{Ekholm_Etnyre_Sabloff_Duality_exact_seq_LCH}
Tobias Ekholm, John~B. Etnyre, and Joshua~M. Sabloff.
\newblock A duality exact sequence for {L}egendrian contact homology.
\newblock {\em Duke Math. J.}, 150(1):1--75, 2009.

\bibitem{Eliashberg_Gromov_Convex_symp_mfds}
Yakov Eliashberg and Mikhael Gromov.
\newblock Convex symplectic manifolds.
\newblock In {\em Several complex variables and complex geometry, {P}art 2
  ({S}anta {C}ruz, {CA}, 1989)}, volume~52 of {\em Proc. Sympos. Pure Math.},
  pages 135--162. Amer. Math. Soc., Providence, RI, 1991.

\bibitem{Eliashberg_Thurston_Confoliations}
Yakov~M. Eliashberg and William~P. Thurston.
\newblock {\em Confoliations}, volume~13 of {\em University Lecture Series}.
\newblock American Mathematical Society, Providence, RI, 1998.

\bibitem{Floer_Morse_thry_Lagr_intersections}
Andreas Floer.
\newblock Morse theory for {L}agrangian intersections.
\newblock {\em J. Differential Geom.}, 28(3):513--547, 1988.

\bibitem{Floer_unregularized_grad_flow_symp_action}
Andreas Floer.
\newblock The unregularized gradient flow of the symplectic action.
\newblock {\em Comm. Pure Appl. Math.}, 41(6):775--813, 1988.

\bibitem{Frauenfelder_Schlenk_Vol_growth_component_Dehn_twist}
U.~Frauenfelder and F.~Schlenk.
\newblock Volume growth in the component of the {D}ehn-{S}eidel twist.
\newblock {\em Geom. Funct. Anal.}, 15(4):809--838, 2005.

\bibitem{Giroux_Massot_Ct_MCG_Leg_circle_bundles}
Emmanuel Giroux and Patrick Massot.
\newblock On the contact mapping class group of legendrian circle bundles.
\newblock 2015.
\newblock {\tt arXiv:1506.01162}.

\bibitem{Khovanov_Seidel_Quivers_HF_braid_gp_actions}
Mikhail Khovanov and Paul Seidel.
\newblock Quivers, {F}loer cohomology, and braid group actions.
\newblock {\em J. Amer. Math. Soc.}, 15(1):203--271, 2002.

\bibitem{Massot_Niederkrueger_Examples_nontrivial_contact_mapping_classes_all_dims}
Patrick Massot and Klaus Niederkr\"uger.
\newblock Examples of non-trivial contact mapping classes in all dimensions.
\newblock 2015.
\newblock {\tt arXiv:1504.03366}.

\bibitem{Milnor_Singular_pts_cx_hypersurfaces}
John Milnor.
\newblock {\em Singular points of complex hypersurfaces}.
\newblock Annals of Mathematics Studies, No. 61. Princeton University Press,
  Princeton, N.J.; University of Tokyo Press, Tokyo, 1968.

\bibitem{Seidel_PhD_thesis}
Paul Seidel.
\newblock {\em Floer homology and symplectic isotopy problem}.
\newblock PhD thesis, Oxford, 1997.

\bibitem{Seidel_Lag_2_spheres_symp_knotted}
Paul Seidel.
\newblock Lagrangian two-spheres can be symplectically knotted.
\newblock {\em J. Differential Geom.}, 52(1):145--171, 1999.

\bibitem{Seidel_Graded_Lags}
Paul Seidel.
\newblock Graded {L}agrangian submanifolds.
\newblock {\em Bull. Soc. Math. France}, 128(1):103--149, 2000.

\bibitem{Seidel_The_Book}
Paul Seidel.
\newblock {\em Fukaya categories and {P}icard-{L}efschetz theory}.
\newblock Zurich Lectures in Advanced Mathematics. European Mathematical
  Society (EMS), Z\"urich, 2008.

\bibitem{Viterbo_Intersections_sous_varietes_lags_fonctionnelles_action_indice_systemes_hams}
Claude Viterbo.
\newblock Intersection de sous-vari\'et\'es lagrangiennes, fonctionnelles
  d'action et indice des syst\`emes hamiltoniens.
\newblock {\em Bull. Soc. Math. France}, 115(3):361--390, 1987.

\bibitem{Wu_Exact_Lags_A_n_surf_singularities}
Weiwei Wu.
\newblock Exact {L}agrangians in {$A_n$}-surface singularities.
\newblock {\em Math. Ann.}, 359(1-2):153--168, 2014.

\end{thebibliography}
\bibliographystyle{plain}

\end{document}